\documentclass[12pt, a4paper]{amsart}

\usepackage{geometry} 
\usepackage{fancyhdr}
\usepackage{url}
\geometry{a4paper, total={6in, 8.1in} }            
\usepackage{amssymb,amsthm}
\usepackage{amsmath}
\usepackage{accents}
\usepackage{eucal}
\usepackage{amscd}
\usepackage[shortalphabetic]{amsrefs}
\usepackage{mathptmx}
\usepackage{times}
\usepackage{graphics}
\usepackage{units}
\usepackage{enumitem}

\allowdisplaybreaks[1]

\newtheorem{theorem}{Theorem}[section]
\newtheorem{lemma}[theorem]{Lemma}
\newtheorem{corollary}[theorem]{Corollary}

\newcommand{\circo}{\accentset{\circ}}


\providecommand{\abs}[1]{\lvert#1\rvert}

\DeclareMathOperator{\tr}{tr}

\newcommand{\ho}{\accentset{\circ}{h}}

\newcommand{\p}{\partial}

\newcommand{\e}{\epsilon}

\newcommand{\mc}{\mathcal}
\newcommand{\R}{\mathbb{R}}
\newcommand{\mbb}{\mathbb}

\DeclareMathOperator{\I}{I}
\DeclareMathOperator{\II}{II}

\newcommand{\Sc}{Sc}

\begin{document}

\title[Ancient Solutions to High Codimension MCF]{Pinched Ancient Solutions to the High Codimension Mean Curvature Flow}

\author{Stephen Lynch}
\address{Freie Universit\"{a}t Berlin\\ Arnimallee 3\\ Berlin 12053\\Germany}
\email{stephen.lynch@fu-berlin.de}
\author{Huy The Nguyen}
\address{Queen Mary University of London\\ Mile End Road\\ London E1 4NS\\ United Kingdom}
\email{h.nguyen@qmul.ac.uk}

\begin{abstract}
We study solutions of high codimension mean curvature flow defined for all negative times, usually referred to as ancient solutions. We show that any compact ancient solution whose second fundamental form satisfies a certain natural pinching condition must be a family of shrinking spheres. Andrews and Baker \cite{Andrews2010} have shown that initial submanifolds satisfying this pinching condition, which generalises the notion of convexity, converge to round points under the flow. As an application, we use our result to simplify their proof. 
\end{abstract}

\maketitle

\section{Introduction}
In this paper, we consider ancient solutions to the mean curvature flow with pinched second fundamental form. A family of smooth immersions $ F : \mathcal M ^{n}\times ( t_0 ,t _1) \to \R ^{n+ k}$ is a solution to the mean curvature flow if 
\begin{align*}
\partial_ t F ( x ,t ) =  {H}(x ,t), \quad x \in \mathcal M, \;t \in ( t _ 0, t_1)   ,
\end{align*}   
where $ H (x, t)$ denotes the mean curvature vector. We will always assume that $ n\geq 2$, $k \geq 1$, and that $ \mathcal M$ is a complete smooth manifold. A solution is referred to as ancient if $t_0 = -\infty$.  

The mean curvature flow is (weakly) parabolic and hence ill-posed backwards in time, however ancient solutions are interesting for several reasons. They arise naturally as tangent flows near singularities \cite{Hamilton1995}, \cite{White2000}, \cite{White2003}, \cite{HuSi99a}, \cite{HuSi09}, and are therefore models for singularity profiles of the flow \cite{HuSi99a}.

Solutions that shrink homothetically provide the first of many examples of ancient solutions. Writing $\mc M_t := F(\mc M,t)$, we have the family of round spheres $\mc M _ t = \mbb S^n_{R(t) }$ with $ R(t) = \sqrt{- 2 n t } $ and the shrinking cylinders $\mc M _t = \mbb S^{n-m} _{R(t) }\times \R^m$, $R(t) = \sqrt{ -2 (n-m) t}$. The Angenent oval \cite{Angenent1992} (also known as the paperclip solution \cite{Lukyanov2004}) is an example of a non-homothetically shrinking compact ancient solution to the curve shortening flow, obtained by gluing together two copies of the grim reaper (or the hairpin solution \cite{Bakas2007}). The grim reaper itself, the bowl solitons and other translating solutions are not only ancient but eternal ($t_1 = \infty$). In \cite{Haslhofer2016}, Haslhofer and Hershkovits construct convex ancient solutions which flow from each of the cylinders $\mbb S^{n-m} \times \mbb R^m$ at $t_0 = -\infty$ to a round $\mbb S^n$ as $t \to 0$. Bourni, Langford and Tinaglia  recently constructed the first example of a compact ancient solution which is interior collapsing \cite{Bourni17}. 
    
In codimension one, a great deal is known about the mean curvature flow under natural curvature conditions such as convexity. In the present work we build mainly on a recent paper by Huisken and Sinestrari \cite{Huisken2015}, where estimates proven earlier by Huisken \cite{Huisken1984} are used to give several characterisations of the shrinking sphere amongst convex ancient solutions (similar results were proven in \cite{Haslhofer2016} by other methods). In particular, Huisken and Sinestrari show that any mean convex ancient solution with uniformly pinched second fundamental form, 
\begin{equation}
\label{eqn_hyppinch}
 h_{ij} \geq \e H g _{ij}, \qquad\e>0,
 \end{equation} 
 must be a family of shrinking spheres.  Similar results also hold for the Ricci flow \cite{Daskalopoulos2012}, \cite{Brendle2011}, and for a large class of fully nonlinear flows of hypersurfaces \cite{Lang17}. 
 
 In higher codimensions, far less is known about the mean curvature flow in general. This is due to the presence of the normal curvature, which complicates the structure of evolution equations for geometric quantities along the flow, and the fact that the second fundamental form is now a normal bundle-valued tensor, so that no obvious notion of convexity is available. We continue here the study of a natural pinching condition, $|h|^2 \leq c |H|^2$, which Andrews and Baker \cite{Andrews2010} (cf. \cite{Hu87}) have successfully employed as an alternative to convexity in higher codimensions. They demonstrated that for values $c \leq \min\{\frac{4}{3n}, \frac{1}{n-1}\}$, this condition is preserved by the flow, and solutions satisfying it flow to round spheres. To motivate this condition, we note that in Euclidean space, a mean convex hypersurface satisfying $|h|^2 \leq \frac{1}{n-1}|H|^2$ is automatically weakly convex (see Lemma \ref{lem_alg1}), while a general submanifold satisfying this condition has nonnegative sectional curvature \cite{Chen93}, and in fact, positive curvature operator (this we prove in Theorem \ref{thm_riempinch} below).
 
Our main result is a high codimension analogue of the sphere characterisation of Huisken and Sinestrari \cite{Huisken2015}, assuming the Andrews-Baker condition in place of \eqref{eqn_hyppinch}.
\begin{theorem}
\label{thm_main}
Let $F: \mathcal M^n \times (-\infty,0) \to \mathbb{R}^{n+k}$, $n \geq 2$, be a compact ancient solution to mean curvature flow with non-vanishing mean curvature vector. Suppose there is a constant $\varepsilon >0$ such that for each $t <0$, the second fundamental form of $\mc M_t$ satisfies
 \begin{equation}
\label{eqn_pinching}
|h|^2 - c_n |H|^2 \leq - \varepsilon |H|^2,
\end{equation}
where
\begin{equation*}
\arraycolsep=1.4pt\def\arraystretch{2.2}
c_n := \left\{ 
\begin{array}{cc}
\dfrac{4}{3n},& \text{ if $n=2,3$},\\
\dfrac{1}{n-1},& \quad \text{ if $ n\geq 4$.} 
\end{array}
\right.\end{equation*}
 Then  $\mathcal M_t$ is a family of shrinking spheres.
\end{theorem}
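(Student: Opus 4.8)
The plan is to adapt the Huisken--Sinestrari strategy for convex ancient solutions to the high codimension setting, using the Andrews--Baker pinching condition \eqref{eqn_pinching} in place of hypersurface convexity. The overall goal is to show that an ancient solution satisfying \eqref{eqn_pinching} must be a shrinking sphere, and the natural obstruction to overcome is to upgrade the scale-invariant pinching into a rigidity statement forcing umbilicity at every time. First, I would establish a priori estimates that hold for any solution satisfying \eqref{eqn_pinching}, showing that the pinching constant $c_n$ is preserved and that the gradient terms $|\nabla h|^2$ are controlled by $|H|^4$; these are the parabolic analogues of the Andrews--Baker computations, giving an estimate of the form $|\nabla h|^2 \leq C|H|^4$ on an ancient solution by a standard contradiction/blow-up argument exploiting that the solution exists for all $t<0$. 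In parallel, I would use the pinching to control the geometry: by \eqref{eqn_pinching} the traceless second fundamental form $\circo h$ satisfies $|\circo h|^2 = |h|^2 - \tfrac{1}{n}|H|^2 \leq (c_n - \tfrac1n - \varepsilon)|H|^2$, and Theorem \ref{thm_riempinch} gives that $\mc M_t$ has positive curvature operator, so each $\mc M_t$ is diffeomorphic to a sphere.

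Next I would study the evolution of the scale-invariant quantity measuring deviation from a sphere. Following Huisken--Sinestrari, one monitors a function such as $f_\sigma := \dfrac{|h|^2 - \tfrac1n|H|^2}{|H|^{2}}$ (or, to get better reaction terms, $|h|^2 - (c_n - \delta)|H|^2$ for suitable small $\delta$), derive its evolution equation, and show using the preserved pinching and the gradient estimate that its maximum is nonincreasing in a way that, combined with ancientness, forces it to be constant. The key point is that the reaction term in the evolution of $f_\sigma$ has a sign coming from the pinching inequality being strict, so a maximum principle argument backwards in time (using that the solution is ancient and the relevant quantities are bounded after scaling) shows $\sup_{\mc M_t} f_\sigma$ is independent of $t$; a strong maximum principle / vanishing of the gradient term then yields $\circo h \equiv 0$ at every time, i.e. every $\mc M_t$ is totally umbilic. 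A totally umbilic submanifold of $\R^{n+k}$ with $n\ge 2$ lies in an affine $(n+1)$-plane and is a round sphere, so $\mc M_t = \mbb S^n_{R(t)}$, and feeding this back into the flow equation gives $R(t) = \sqrt{-2nt}$.

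The main obstacle, and the place where the high-codimension analysis genuinely differs from \cite{Huisken2015}, is the normal curvature: the reaction terms in the evolution equations of $|h|^2$, $|H|^2$ and the associated scale-invariant quantities contain contributions from $\Rp$ (the normal curvature) and from the full contractions $R_1 = \sum |\langle h_{ij}, h_{kl}\rangle|^2$, $R_2 = \sum |\sum_p \langle h_{ip}, h_{jp}\rangle|^2$, which do not have an obvious sign. Here the precise value of $c_n$ — the split between $n=2,3$ and $n\geq 4$ — is what makes the algebra work: one needs sharp pointwise inequalities bounding the bad gradient and curvature terms by the good ones under the hypothesis $|h|^2 \leq c_n|H|^2$, of the type proven by Andrews--Baker and refined in Lemma \ref{lem_alg1}. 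I expect the bulk of the work to be: (i) carefully assembling these algebraic estimates so the reaction term in the monotone quantity has the correct sign, and (ii) justifying the backward-in-time maximum principle, which requires ruling out loss of compactness as $t\to-\infty$ — here one rescales at a sequence of times, extracts a limit ancient solution that is a shrinker by the equality case, and concludes by a rigidity/classification of pinched shrinkers (again the sphere). Once umbilicity is in hand the rest is classical.
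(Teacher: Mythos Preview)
Your plan has a genuine gap at its central step. You propose to run a pointwise maximum principle on a scale-invariant quantity like $f_0 = |\circo h|^2/|H|^2$, arguing that ``the reaction term has a sign coming from the pinching inequality being strict'' and that a backward maximum principle then forces $\sup_{\mc M_t} f_0$ to be constant, hence $\circo h \equiv 0$ by the strong maximum principle. But in Euclidean space the evolution of $f_0$ under \eqref{eqn_pinching} gives at best $\partial_t f_0 \leq \Delta f_0 + (\text{gradient terms})$; there is no strictly negative zero-order term $-c f_0$. (Contrast this with the sphere, where the background curvature produces exactly such a term and your argument does work --- that is how Section~6 proceeds.) So $\sup f_0$ is merely nonincreasing, which you already know from pinching, and ancientness gives you nothing further. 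The gradient estimate $|\nabla h|^2 \leq C|H|^4$ you propose to derive does not help here and is in fact one of the ingredients the paper is designed to avoid.

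The paper's route is quite different and rests on three ingredients you do not mention. First, one works with the non-scale-invariant $f_\sigma = |\circo h|^2/|H|^{2(1-\sigma)}$ and uses the \emph{integral} inequality $\frac{d}{dt}\int f_\sigma^p\,d\mu_g \leq -\int |H|^2 f_\sigma^p\,d\mu_g$ from \cite{Andrews2010}; this comes from integration by parts against a Simons-type identity and is not available pointwise. Second, H\"older's inequality converts this into a superlinear ODE for $\varphi = \int f_\sigma^p\,d\mu_g$, namely $\varphi' \leq -\mu(\mc M_t)^{-2/\sigma p}\varphi^\gamma$, which blows up backward in time unless $\varphi \equiv 0$ --- \emph{provided} one has a polynomial area bound $\mu(\mc M_t)\leq c|t|^r$ (Theorem~\ref{thm_ancmain}). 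Third, that area bound is obtained by Gauss--Bonnet when $n=2$, and for $n\geq 3$ by a type-I/type-II dichotomy: a type-II blow-down would produce a complete, pinched, eternal solution with bounded nonzero $|H|$, but Corollary~\ref{thm_compact} (via Ni--Wu) forces such a limit to be compact and hence to die in finite time, a contradiction; the remaining type-I case gives $|H|^2 \leq -C/t$ and hence polynomial area growth directly from $-\frac{d}{dt}\mu = \int |H|^2$. The compactness result you cite only for topology is thus the linchpin that rules out type-II behaviour.
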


The constant $c_n$ is optimal for $n \geq 4$, since, for $k=1$ and every $n\geq 2$, Haslhofer and Hershkovits \cite{Haslhofer2016} have constructed an ancient solution other than the shrinking sphere which satisfies $|h|^2 \leq \frac{1}{n-1}|H|^2$. The values $c_{2}$ and $c_3$ come out of the analysis in \cite{Andrews2010}, rather than geometric considerations, and might be improved. There is however an immersion of the Veronese surface into $\mbb R^5$ which shrinks homothetically under the mean curvature flow and satisfies $|h|^2 = \frac{5}{6} |H|^2$, so one cannot hope to do better than $c_2 = \frac{5}{6}$. 

The paper is arranged as follows. In Section 3, we show that any complete submanifold of Euclidean space with bounded, non-vanishing mean curvature, and which satisfies the pinching condition \eqref{eqn_pinching}, must be compact. This is a natural high codimension generalisation of a theorem of Hamilton \cite{Hamilton1994}, which asserts the compactness of complete hypersurfaces satisfying \eqref{eqn_hyppinch}. In Section 4 we use this compactness result to prove Theorem \ref{thm_main},
as well as a further characterisation of the shrinking sphere as the only weakly pinched ancient solution with type-I curvature growth. We then apply our results to provide an alternate proof of the convergence theorem of Andrews and Baker which does not make use of Stampacchia iteration or a gradient estimate.  Finally, in Section 6, we prove a classification analogous to Theorem \ref{thm_main} for high codimension ancient solutions in the sphere.

We would like to thank Mat Langford for many helpful discussions which have benefited this work.
  
\section{Preliminaries}
Let $\mathcal M^n$ be a smooth, immersed submanifold of a Riemannian manifold $\mathcal N^{n+k}$. Denote the curvature operator of $\mathcal N$ by $\bar R$. We will usually take $\mathcal N$ to be Euclidean space - only in the final section do we consider $\mathcal N = \mathbb{S}^{n+k}$. We work in local orthonormal frames for the tangent bundle $  T \mc  M $ and normal bundle $ N \mc M$, denoted by $\{e_i\}$ and $ \{ \nu _ \alpha\} $ respectively. When working in such bases, unless otherwise specified, we will sum over repeated indices whether they are raised or lowered. For example, we may write the mean curvature vector as
\begin{align*}
H = \tr_ g h = g^{ij} h _{ ij }  = h_{i i} = {h_i}^i = g^{ij} {h _{ij}} ^ \alpha \nu_\alpha = h_{ii\alpha} \nu _\alpha. 
\end{align*}
We can then write familiar equations such as Codazzi's equation as 
\begin{align*}
\nabla _ i h_{jk \alpha} -\nabla_j h_{ ik \alpha} = \bar R _{  ijk\alpha}. 
\end{align*}
Gauss' equation is given by 
\begin{align*}
R _{ijkl} & = h _{ik \alpha} h _{ jl \alpha} - h _{ jk \alpha} h _{il \alpha} + \bar R _{ ijkl},
\end{align*}
and for the normal curvature we have
\begin{align*}
R^\perp _{ ij \alpha \beta} = h _{i p \alpha } h_{j p \beta} - h _{ j p \alpha} h_{i p\beta} + \bar R _{ij\alpha\beta}. 
\end{align*}
In fact, the normal curvature depends only on the traceless second fundamental form. Writing
\begin{align*}
h_{ij \alpha } = \circo h _{ ij \alpha} + \frac{ H _ \alpha}{ n}  g _{ij},
\end{align*}
we have 
\begin{align*}
R^\perp _{ ij \alpha \beta} = \circo h _{i p \alpha } \circo h_{j p \beta} - \circo h _{ j p \alpha} \circo h_{i p\beta} + \bar R _{ij\alpha\beta} .
\end{align*}
\subsection{Evolution Equations}

Equations for the evolution of all relevant geometric quantities along the flow are computed in detail in \cite{Andrews2010}. For the second fundamental form we have
\begin{align}\label{eqn_evolSFF}
\nabla _{ \partial _t} h _{ ij \alpha} & = \Delta h_{ij \alpha} + h_{ij \beta}  h_{ pq \beta} h_{pq \alpha} + h_{iq \beta}  h_{ qp \beta} h_{pi \alpha} + h _{ jq\beta}  h _{ qp \beta} h_{pi \alpha} \\
&\;\;\;\;- 2 h_{ip \beta}  h_{jq \beta} h_{pq \alpha}, \notag
\end{align}
and taking the trace with respect to $i$ and $j$, 
\begin{align}\label{eqn_evolmean}
\nabla _{ \partial _t} H _ {\alpha} & = \Delta H _ \alpha + H _ \beta  h_{ pq \beta } h_{ pq \alpha}.
\end{align}
The equations for $ |h|^2$ and $|H|^ 2 $ are then
\begin{align}
\partial _ t |h| ^ 2 & = \Delta |h| ^ 2 - 2|\nabla h| ^ 2 + 2 \sum_{ \alpha, \beta} \left( \sum _{ i , j } h _{ij \alpha} h _{ ij \beta} \right )^ 2 \\
&\hspace{3.02cm} + 2 \sum_{i,j,\alpha,\beta} \left( \sum_p h_{ ip\alpha} h _{ jp \beta} - h _{ jp \alpha} h _{ ip \beta }\right) ^ 2 \notag,
\end{align}
and
\begin{align}
\partial _ t |H|^ 2 & = \Delta |H| ^ 2 - | \nabla H| ^ 2 + 2 \sum_{ i, j } \left( \sum _\alpha H _ \alpha h _{ ij \alpha}\right)^ 2.
\end{align}
The last term in the evolution equation for $|h|^2$ can be expressed purely in terms of the normal curvature,
\begin{align*}
\sum_{i,j,\alpha,\beta} \left( \sum_p h_{ ip\alpha} h _{ jp \beta} - h _{ jp \alpha} h _{ ip \beta }\right) ^ 2 =\sum_{i,j,\alpha,\beta} \left( \sum_p \circo h_{ ip\alpha} \circo h _{ jp \beta} - \circo h _{ jp \alpha} \circo h _{ ip \beta }\right) ^ 2 = | R ^ \perp | ^ 2 .
\end{align*}
We will find it convenient to denote the reaction terms above by
\begin{align*}
R _1 & = \sum_{\alpha,\beta} \left( \sum _{ i ,j } h _{ ij \alpha} h _{ ij \beta} \right) ^ 2 + | R ^ \perp | ^ 2, \\
R_2 & = \sum_{i,j} \left( \sum _ {\alpha} H _ \alpha h_{ij \alpha}\right)^ 2 . 
\end{align*}

\subsection{Preservation of pinching}
We consider the quadratic quantity
\begin{align}
\mc Q = | h| ^ 2 + a - c |H| ^ 2 
\end{align} 
where $ c$ and $ a$ are positive constants. Combining the evolution equations for $ |h| ^ 2 $ and  $| H|^ 2$ yields
\begin{align}
\label{eqn_pinchpres}
\partial_t \mc Q & = \Delta \mc Q - 2 ( |\nabla h |^2 - c |\nabla H| ^ 2 ) + 2 (R_1 - c R _2).
\end{align} 
The gradient estimate
\begin{align*}
| \nabla h|\geq \frac{ 3}{n+2} | \nabla  H |^ 2,  
\end{align*}
which is proven as in Hamilton \cite{Hamilton1982} and Huisken \cite{Huisken1984}, shows
that the gradient terms in \eqref{eqn_pinchpres} are strictly negative if $ c < \frac{ 3}{n+2}$. Careful estimating shows that for $c < \frac{4}{3n}$ we also have $R_1 - cR_2 < 0$ (see \cite{Andrews2010}), so by the maximum principle:

\begin{lemma}
Let $F:\mc M ^ n \times[0,T)\rightarrow \R^{n+k}$ be a solution to the mean curvature flow such that $\mc M_0$ satisfies 
\begin{align*}
|h|^2 + a \leq  c |H|^2 
\end{align*}
for some $a > 0$ and $ c \leq \frac{4}{3 n }$. Then this condition is preserved by the mean curvature flow.  
\end{lemma}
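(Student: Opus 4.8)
The plan is to run the scalar parabolic maximum principle on $\mc Q = |h|^2 + a - c|H|^2$ using its evolution equation \eqref{eqn_pinchpres}. The role of the additive constant $a$ is the key point: on the region $\{\mc Q \le 0\}$ one has $c|H|^2 \ge |h|^2 + a \ge a$, hence $|H|^2 \ge a/c > 0$. Thus wherever $\mc Q \le 0$ the mean curvature vector is non-vanishing, with a quantitative lower bound, and this is precisely the situation in which the reaction terms $R_1$ and $R_2$ in \eqref{eqn_pinchpres} can be compared.

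First I would dispose of the gradient terms. The estimate $|\nabla h|^2 \ge \frac{3}{n+2}|\nabla H|^2$ gives $|\nabla h|^2 - c|\nabla H|^2 \ge \bigl(\tfrac{3}{n+2} - c\bigr)|\nabla H|^2$, and since $4(n+2) \le 9n$ for every $n \ge 2$ we have $c \le \frac{4}{3n} \le \frac{3}{n+2}$; hence the term $-2(|\nabla h|^2 - c|\nabla H|^2)$ in \eqref{eqn_pinchpres} is nonpositive. For the reaction terms I would invoke the algebraic estimate of Andrews and Baker \cite{Andrews2010}: if $H \ne 0$ and $|h|^2 \le c|H|^2$ with $c \le \frac{4}{3n}$, then $R_1 \le c R_2$. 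This is the substantive part of the argument, a delicate inequality between quartic expressions in the second fundamental form obtained by separating the contributions of the traceless part and of $H$ and using the identity expressing one of the reaction terms through $|R^\perp|^2$; but since it is proved in \cite{Andrews2010} it may be cited verbatim.

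It then remains to combine these two facts through a maximum-principle argument. Taking $\mc M$ compact for simplicity, fix $T' < T$, choose $\Lambda$ large, and for $\eta > 0$ put $\mc Q_\eta = \mc Q - \eta e^{\Lambda t}$, so that $\mc Q_\eta < 0$ at $t = 0$. If the spatial maximum of $\mc Q_\eta$ first reached zero at some time $t_0 \le T'$ and point $x_0$, then $\mc Q(x_0, t_0) = \eta e^{\Lambda t_0}$, which for $\eta$ small is $< a$; so $|h|^2 < c|H|^2$ and $|H| \ne 0$ at $(x_0, t_0)$, and both estimates above apply there. Using $\Delta \mc Q \le 0$ and $\nabla \mc Q = 0$ at $(x_0, t_0)$ in \eqref{eqn_pinchpres} then forces $\partial_t \mc Q_\eta \le -\eta\Lambda e^{\Lambda t_0} < 0$ at that point, contradicting that $t_0$ is the first such time. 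Hence $\mc Q_\eta < 0$ on $\mc M \times [0, T']$; letting $\eta \to 0$ and $T' \to T$ gives $\mc Q \le 0$ throughout, which is the claim.

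The main obstacle, if one were not permitted to quote \cite{Andrews2010}, is exactly the reaction inequality $R_1 \le c R_2$: it is the only place where the specific threshold $\frac{4}{3n}$ is used and where real computation, as opposed to standard parabolic machinery, is needed. The remaining issues are routine — the noncompact case requires only a maximum principle valid on complete manifolds of bounded geometry, and the sole conceptual ingredient beyond that is the use of $a > 0$ to stay away from the locus $H = 0$ where the reaction estimate degenerates.
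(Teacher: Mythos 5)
Your proposal is correct and follows the same route as the paper: combine the evolution equation \eqref{eqn_pinchpres} for $\mc Q$ with the gradient estimate $|\nabla h|^2 \geq \tfrac{3}{n+2}|\nabla H|^2$ (together with the check that $\tfrac{4}{3n}\leq\tfrac{3}{n+2}$ for $n\geq 2$) and the Andrews--Baker reaction estimate $R_1 - cR_2 \leq 0$, then close with the parabolic maximum principle. The paper compresses the final step into the phrase ``so by the maximum principle''; your barrier $\mc Q_\eta = \mc Q - \eta e^{\Lambda t}$, with the observation that for $\eta$ small one has $\mc Q < a$ (hence $|h|^2 < c|H|^2$ and $|H|\neq 0$) at a first touching point so that the reaction estimate applies there, is the standard way to make that step rigorous and correctly identifies the role of $a>0$.
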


As a consequence, we see that the flow preserves both $|H| >0$ and $|h|^2 - c_n |H|^2 \leq - \varepsilon |H|^2$. 

\section{Curvature Pinching}

We begin with a purely algebraic calculation, which in particular shows that sufficiently pinched symmetric matrices are positive/negative definite. 

\begin{lemma}
\label{lem_alg1}
Let $ B$ be a symmetric matrix with eigenvalues $\kappa $. For any two eigenvalues $\kappa_1$, $\kappa_2$, there holds
\begin{align*}
|B|^2 - \frac{1}{n-1} (\tr B) ^2 & = -2 \kappa _1 \kappa_2 + \left ( \kappa_1 + \kappa_2 - \frac{1}{n-1} \tr B \right)^2 + \sum_{l=3}^n \left( \kappa_ l - \frac{1}{n-1} \tr B \right) ^ 2. 
\end{align*}
\end{lemma}
\begin{proof}
We expand the right-hand side
\begin{align*}
 \left ( \kappa_1 + \kappa_2 - \frac{1}{n-1} \tr B \right)^2 & = \kappa_1^2 + 2 \kappa_1\kappa_2 + \kappa_2 ^ 2 - 2\frac{\kappa_1 + \kappa_2} { n-1} \tr B  + \frac{1}{(n-1)^2} (\tr B)^2,
\end{align*}
and note that
\begin{align*}
\sum_{l=3}^n \left( \kappa_ l - \frac{1}{n-1} \tr B \right) ^ 2 & = \sum_{l=3} ^ n \left( \kappa_ l ^ 2 - 2 \frac{ \kappa_l}{n-1} \tr B + \frac{1}{ (n-1)^2}(\tr B)^2 \right)\\
& = \sum _{ l=3}^ n \kappa_l^2 - \frac{2 \tr B}{n-1} \sum_{l=3}^n \kappa_l + \frac{ n-2}{(n-1)^2} (\tr B)^2.
\end{align*} 
This gives
\begin{align*}
-2 \kappa _1 \kappa_2 &+ \left ( \kappa_1 + \kappa_2 - \frac{1}{n-1} \tr B \right)^2 + \sum_{l=3}^n \left( \kappa_ l - \frac{1}{n-1} \tr B \right) ^ 2 \\
&=\kappa_1^2 + \kappa_2 ^ 2 - \frac{ 2 ( \kappa_1 + \kappa_2) }{ n-1} \tr B + \frac{1}{ (n-1) ^ 2 } (\tr B)^ 2 + \sum_{l=3} ^ n \kappa_l ^ 2\\
&\;\;\;\; - \frac{2}{ n-1} \sum_{l=3} ^ n \kappa_ l \tr B + \frac{n-2}{ (n-1)^2} (\tr B)^2 \\
&= |B|^2 - \frac{2}{n-1} \tr B^2 + \frac{ n-1} { (n-1)^2} \tr B^2 = | B|^2 - \frac{1}{n-1} (\tr B)^2.
\end{align*}
\end{proof}

Hence, we see that if $|B|^2 - \frac{1}{n-1} (\tr B) ^2 \leq 0$, then all the eigenvalues of $ B$ have the same sign. In particular, if $\tr B > 0$ then $B$ is positive definite. This allows us to pull the pinching condition \eqref{eqn_pinching} back to an intrinsic curvature condition on $\mc M^n$.

\begin{lemma}
\label{thm_riempinch}
An immersed submanifold $\mc M^n \subset \mathbb{R}^{n+k}$, $n\geq 2$, whose second fundamental form satisfies
\[|h|^2 - \frac{1}{n-1}|H|^2\leq -\varepsilon |H|^2\]
for some $\varepsilon > 0 $ has curvature operator pinched by
\[\mc R \geq \frac{\varepsilon}{2} |H|^2I.\]
\end{lemma}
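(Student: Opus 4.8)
The plan is to compute the curvature operator $\mc R$ acting on two-forms explicitly using the Gauss equation (here $\bar R = 0$ since $\mc N = \R^{n+k}$), and then reduce the positivity question to the algebraic statement of Lemma \ref{lem_alg1}. Concretely, for a two-form $\omega = \omega_{ij} e^i \wedge e^j$ (antisymmetric), the curvature operator is the quadratic form $\mc R(\omega,\omega) = R_{ijkl}\,\omega_{ij}\omega_{kl} = \sum_\alpha (h_{ik\alpha}h_{jl\alpha} - h_{jk\alpha}h_{il\alpha})\omega_{ij}\omega_{kl}$. For each fixed normal index $\alpha$, the second fundamental form $h_{\cdot\cdot\alpha}$ is a symmetric matrix, call it $B^\alpha$, and the corresponding summand is precisely the curvature-operator form of the "hypersurface" curvature tensor built from $B^\alpha$, i.e. $(B^\alpha \owedge B^\alpha)$ in Kulkarni--Nomizu notation. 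So $\mc R = \sum_\alpha \mc R^{B^\alpha}$ where each $\mc R^{B^\alpha}$ is the curvature operator of a single symmetric matrix.

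The key reduction is then a pointwise statement: for a symmetric matrix $B$, $\mc R^B(\omega,\omega) \geq \mu\,|\omega|^2$ for every two-form $\omega$ if and only if $\kappa_p + \kappa_q \geq 2\mu$ (equivalently, the sum of the two smallest eigenvalues is $\geq 2\mu$) — because $\mc R^B$ is diagonalised in the basis $e^p \wedge e^q$ of eigenvectors of $B$, with eigenvalue $\kappa_p\kappa_q$ on $e^p\wedge e^q$... wait, more carefully: the eigenvalue of $\mc R^B$ on $e^p \wedge e^q$ is $\kappa_p \kappa_q$, so the lower bound on $\mc R^B$ is $\min_{p\neq q}\kappa_p\kappa_q$, which is not simply related to eigenvalue sums. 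I would instead argue directly: since $|h|^2 - \frac{1}{n-1}|H|^2 \leq -\e|H|^2 \leq 0$, Lemma \ref{lem_alg1} shows that for \emph{each} $\alpha$ the matrix $B^\alpha$ has all eigenvalues of one sign, hence $\mc R^{B^\alpha} \geq 0$ as a quadratic form on two-forms (all eigenvalues $\kappa_p^\alpha\kappa_q^\alpha \geq 0$). This already gives $\mc R \geq 0$. To get the quantitative bound I would first diagonalise the \emph{mean curvature vector}: choose the normal frame so that $H = |H|\nu_1$, so $H_\alpha = 0$ for $\alpha \geq 2$. Then $\tr B^1 = |H|$ and $\tr B^\alpha = 0$ for $\alpha \geq 2$.

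For the quantitative lower bound I would isolate the $\alpha = 1$ contribution. Apply Lemma \ref{lem_alg1} to $B = B^1$: the identity there, combined with $|h|^2 \geq |B^1|^2$ (so $|B^1|^2 - \frac{1}{n-1}|H|^2 \leq |h|^2 - \frac{1}{n-1}|H|^2 \leq -\e|H|^2$), gives for any two eigenvalues $\kappa_1,\kappa_2$ of $B^1$ that $-2\kappa_1\kappa_2 \leq -\e|H|^2$, i.e. $\kappa_1\kappa_2 \geq \frac{\e}{2}|H|^2$. Thus every eigenvalue $\kappa_p^1\kappa_q^1$ ($p\neq q$) of $\mc R^{B^1}$ is $\geq \frac{\e}{2}|H|^2$, so $\mc R^{B^1} \geq \frac{\e}{2}|H|^2 I$ on two-forms; combined with $\mc R^{B^\alpha}\geq 0$ for $\alpha\geq 2$, summing yields $\mc R = \sum_\alpha \mc R^{B^\alpha} \geq \frac{\e}{2}|H|^2 I$, as claimed. (One should note the degenerate case $n=2$: there is only one two-form up to scale, $\omega = e^1\wedge e^2$, and $\mc R(\omega,\omega) = 2\sum_\alpha \det B^\alpha$, so the same bound holds directly.)

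The main obstacle is bookkeeping rather than conceptual: one must be careful with the normalisation conventions for $\mc R$ as a quadratic form on $\Lambda^2$ (factors of $2$ and $4$ from summing over ordered versus unordered index pairs, and the norm on two-forms) so that the constant $\frac{\e}{2}$ comes out correctly, and one must verify that the inequality $|B^1|^2 \leq |h|^2$ together with Lemma \ref{lem_alg1} applied to $B^1$ genuinely delivers $\kappa_p^1\kappa_q^1 \geq \frac{\e}{2}|H|^2$ for \emph{all} pairs (it does, since Lemma \ref{lem_alg1} holds for an arbitrary choice of the two distinguished eigenvalues). The passage from "each $B^\alpha$ contributes a nonnegative operator" to the full bound is then immediate from linearity of $\omega \mapsto \mc R(\omega,\omega)$ in the curvature tensor.
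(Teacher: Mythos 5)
Your decomposition $\mc R = \sum_\alpha \mc R^{B^\alpha}$ and the reduction of each $\mc R^{B^\alpha}$ to eigenvalue products of $B^\alpha$ is exactly what the paper does, and your estimate $\mc R^{B^1} \geq \frac{\e}{2}|H|^2 I$ (in the frame with $H = |H|\nu_1$) is correct. But the step where you discard the remaining normal directions by asserting $\mc R^{B^\alpha} \geq 0$ for $\alpha \geq 2$ is a genuine gap, and the justification you give for it is wrong. You argue that $|h|^2 - \frac{1}{n-1}|H|^2 \leq 0$ plus Lemma~\ref{lem_alg1} forces \emph{each} $B^\alpha$ to have eigenvalues of one sign. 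That is not what Lemma~\ref{lem_alg1} says: it requires the hypothesis $|B^\alpha|^2 - \frac{1}{n-1}(\tr B^\alpha)^2 \leq 0$ for the \emph{individual} matrix $B^\alpha$, and the pinching hypothesis controls only the sums $|h|^2 = \sum_\alpha |h_\alpha|^2$ and $|H|^2 = \sum_\alpha |H_\alpha|^2$, not each summand. In the frame you chose, $\tr B^\alpha = 0$ for $\alpha \geq 2$, so any nonzero such $B^\alpha$ is traceless and necessarily has eigenvalues of both signs; its curvature operator $\mc R^{B^\alpha}$ then has a strictly negative eigenvalue $\kappa_p^\alpha\kappa_q^\alpha$. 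So both the statement $\mc R^{B^\alpha} \geq 0$ and the intermediate claim ``this already gives $\mc R \geq 0$'' are unjustified as written.

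The paper avoids this by applying Lemma~\ref{lem_alg1} to \emph{every} $\alpha$, accepting that the resulting lower bound $\mc R_\alpha \geq \tfrac12\bigl(\tfrac{1}{n-1}|H_\alpha|^2 - |h_\alpha|^2\bigr)I$ is negative for some $\alpha$, and then summing: the negative contributions $-\tfrac12|h_\alpha|^2$ for $\alpha\geq 2$ are exactly cancelled because the $\alpha=1$ term carries slack. Concretely, your bound on $\kappa_p^1\kappa_q^1$ is not tight: $|h_1|^2 = |h|^2 - \sum_{\alpha\geq 2}|h_\alpha|^2$, so $\kappa_p^1\kappa_q^1 \geq \tfrac{\e}{2}|H|^2 + \tfrac12\sum_{\alpha\geq 2}|h_\alpha|^2$, which precisely absorbs the worst-case negativity $\mc R^{B^\alpha} \geq -\tfrac12|h_\alpha|^2 I$ from the other directions. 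Your argument can therefore be repaired by keeping this extra term rather than throwing it away, but as written it does not close.
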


\begin{proof}
It suffices to work over a single point $p \in \mc M$. Using the Gauss equation, we split the curvature operator into components
\begin{align*}
\mc R(x \wedge y, u \wedge v) &= h(x,u) \cdot h(y,v)- h(x,y)\cdot h(u,v)\\
		&= \sum_\alpha h_\alpha(x,u)h_\alpha(y,v) - h_\alpha(x,y)h_\alpha(u,v)\\
		 & =: \sum_\alpha \mc R_\alpha(x\wedge y, u \wedge v).
\end{align*}
For any fixed index $\alpha$ we can choose an orthonormal frame $\{e_i\}$ for $T_p \mc M$ which diagonalises $h_\alpha$, in which case the bivectors $e_i \wedge e_j$ with $i\not = j$ diagonalise $\mc R_\alpha$. The corresponding eigenvalues are (no summation)
\[ \mc R_\alpha (e_i \wedge e_j, e_i \wedge e_j) = h_{\alpha i i} h_{\alpha j j},\]
so that by Lemma \ref{lem_alg1}, 
\[\mc R_\alpha(\omega, \omega) \geq \frac{1}{2}\left(\frac{1}{n-1}|H_\alpha|^2 - |h_\alpha|^2\right) |\omega|^2\]
for any $\omega \in \bigwedge^2 T_p \mc M$. Taking the sum, we obtain
\[\mc R(\omega, \omega) \geq \frac{1}{2}\sum_{\alpha = 1}^k\left( \frac{1}{n-1}|H_\alpha|^2 - |h_\alpha|^2 \right)|\omega|^2\geq \frac{\varepsilon}{2} |H|^2 |\omega|^2.\]
\end{proof}

As a direct application of the above estimate and a theorem of Ni-Wu \cite{Ni2007}, we obtain a high codimension version of the main theorem in \cite{Hamilton1994}.

\begin{corollary}\label{thm_compact}
Any complete immersed submanifold $\mc M^n$ of $\mathbb{R}^{n+k}$ with bounded, non-vanishing mean curvature vector, and which has second fundamental form pinched by
\[|h|^2 - \frac{1}{n-1}|H|^2 < -\varepsilon |H|^2\]
for some $\varepsilon > 0$, is compact. 
\end{corollary}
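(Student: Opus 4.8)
The plan is to read off from Theorem~\ref{thm_riempinch} that $\mc M$ carries a strictly positive, suitably pinched curvature operator, and then to invoke the compactness theorem of Ni--Wu~\cite{Ni2007}.

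First I would collect the curvature estimates that feed into Ni--Wu's theorem. Since the mean curvature vector is bounded, say $|H|\le C_0$ on $\mc M$, the pinching hypothesis forces $|h|^2<\big(\tfrac1{n-1}-\varepsilon\big)|H|^2\le\tfrac1{n-1}C_0^2$, so $\mc M$ has uniformly bounded second fundamental form; by the Gauss equation its curvature operator is then bounded above, $\mc R\le C(n)|h|^2\,I\le C(n)|H|^2\,I$, and $\mc M$ is complete with bounded curvature. On the other hand Theorem~\ref{thm_riempinch} supplies the matching lower bound $\mc R\ge\tfrac\varepsilon2|H|^2\,I$, which is \emph{strictly} positive because $H$ is nowhere zero; in particular $\operatorname{Ric}\ge\tfrac{(n-1)\varepsilon}{2}|H|^2\,g$, and since $\operatorname{scal}=|H|^2-|h|^2\le|H|^2$ we also get $\operatorname{Ric}\ge\tfrac{(n-1)\varepsilon}{2}\operatorname{scal}\cdot g$ with $\operatorname{scal}>0$. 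Hence every eigenvalue of $\mc R$ lies between fixed positive multiples of $|H|^2$: $\mc M$ is a complete manifold whose curvature operator is bounded, strictly positive, and uniformly pinched.

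It then remains to quote the theorem of Ni--Wu~\cite{Ni2007} to conclude that $\mc M$ is compact. The point worth emphasising is why the classical Bonnet--Myers theorem does not suffice: the mean curvature vector is only assumed bounded, not bounded away from zero, so the Ricci lower bound is not a fixed positive multiple of $g$. Indeed, were $\mc M$ noncompact, the soul theorem would make it diffeomorphic to $\R^n$ and hence furnish a geodesic ray $\gamma$; applying the second variation formula along $\gamma$ to the test fields $\sin(\pi t/T)E(t)$ with $E$ parallel, summing over an orthonormal frame, and using that the minimising geodesic $\gamma$ has no interior conjugate points together with $\operatorname{Ric}(\gamma',\gamma')\ge\tfrac{(n-1)\varepsilon}{2}|H|^2$, one finds $\int_{T/4}^{3T/4}|H|^2(\gamma(t))\,dt\le C/T$ for every $T$, so that $|H|$, and with it the whole curvature operator, would have to decay along every ray. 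Ruling out exactly this behaviour — that a complete manifold with bounded, uniformly pinched, strictly positive curvature operator cannot be noncompact — is the substance of the Ni--Wu theorem and the main obstacle in the proof; everything else is routine bookkeeping with the Gauss equation and Theorem~\ref{thm_riempinch}.
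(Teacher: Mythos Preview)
Your argument is correct and follows essentially the same route as the paper: bound the full curvature using bounded $|H|$ and the pinching, invoke Theorem~\ref{thm_riempinch} to get $\mc R \geq \tfrac{\varepsilon}{2}|H|^2 I$, compare with $\Sc = |H|^2 - |h|^2 \leq |H|^2$ to obtain $\mc R \geq c\,\Sc\, I$, and then cite Ni--Wu. The only cosmetic difference is that the paper states the pinching at the level of the curvature operator rather than the Ricci tensor, and omits your explanatory paragraph on why Bonnet--Myers is insufficient.
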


\begin{proof}
Since $|H|$ is bounded, the pinching ensures that $|h|$, and therefore the full curvature operator, are also bounded. Taking traces of the Gauss equation shows that the scalar curvature $\Sc$ is given by 
\[\Sc = |H|^2 - |h|^2,\]
so applying Theorem \ref{thm_riempinch}, we see that the curvature operator is pinched by
\[ \mc R \geq \varepsilon \, \Sc \, I.\]
By a result of Ni-Wu \cite{Ni2007}, $\mc M$ must then be compact.
\end{proof}

\section{Ancient Solutions in Euclidean Space} 

The following theorem is due to Huisken and Sinestrari \cite{Huisken2015} in case $k=1$. With the estimates of Andrews and Baker in place, the proof in higher codimensions is the same.  

\begin{theorem}
\label{thm_ancmain}
Let $F: \mc M^n \times (-\infty, 0) \to \mathbb{R}^{n+k}$ be a compact ancient solution to mean curvature flow satisfying the pinching condition $|h|^2 - c_n |H|^2 \leq - \varepsilon |H|^2$ for some $\varepsilon > 0$. If, in addition, the area $\mu(\mc M_t) = \int_{\mc M_t} d\mu_g$ satisfies the decay condition
\begin{equation}
\label{eqn_areadecay}
\mu(\mc M_t) \leq c|t|^r, \qquad t\leq -T,
\end{equation}
for some positive constants $c$, $r$ and $T$ independent of $t$, then $\mc M_t$ is a family of shrinking spheres. 
\end{theorem}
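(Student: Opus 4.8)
The plan is to follow the strategy of Huisken and Sinestrari \cite{Huisken2015}, replacing convexity by the Andrews--Baker pinching and using Huisken's monotonicity formula to force the backward and forward limits of the rescaled flow to agree.

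\emph{Step 1: reduction to a shrinking point.} By Theorem \ref{thm_riempinch} each $\mc M_t$ has positive curvature operator, hence $\mathrm{Ric}>0$; since $\mc M$ is compact the solution becomes singular in finite forward time, and after a time translation we may assume its maximal existence interval is $(-\infty,0)$. As in \cite{Huisken2015}, the pinching together with Lemma \ref{lem_alg1} and Myers' theorem bounds the intrinsic diameter of $\mc M_t$ by a multiple of $(\min_{\mc M_t}|H|)^{-1}$, while (using the pinching and a Harnack/gradient estimate) $\min_{\mc M_t}|H|\to\infty$ as $t\to 0^-$. Hence $\mc M_t$ contracts to a single point $p_0\in\R^{n+k}$, which we translate to the origin.

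\emph{Step 2: monotonicity.} Introduce the rescaled flow $\tilde F(\cdot,\tau):=(-2nt)^{-1/2}F(\cdot,t)$, $\tau=-\tfrac12\log(-t)\in\R$, an eternal solution of the rescaled mean curvature flow. Huisken's monotonicity formula says the Gaussian area $\tilde\Theta(\tau):=(4\pi)^{-n/2}\int_{\tilde{\mc M}_\tau}e^{-|\tilde F|^2/4}\,d\tilde\mu$ is nonincreasing, with $\tilde\Theta'(\tau)=-(4\pi)^{-n/2}\int|\tilde H+\tfrac12\tilde F^\perp|^2 e^{-|\tilde F|^2/4}\,d\tilde\mu$. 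Since $\tilde\Theta(\tau)$ coincides with Huisken's Gaussian density ratio of the original flow centred at $(0,0)$, the area hypothesis \eqref{eqn_areadecay} keeps it bounded, so $\tilde\Theta_\pm:=\lim_{\tau\to\pm\infty}\tilde\Theta(\tau)$ exist and $\int_\R\!\int|\tilde H+\tfrac12\tilde F^\perp|^2 e^{-|\tilde F|^2/4}\,d\tilde\mu\,d\tau=(4\pi)^{n/2}(\tilde\Theta_--\tilde\Theta_+)<\infty$.

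\emph{Step 3: both limits are round spheres.} Taking $\tau_j\to-\infty$ (respectively $+\infty$) and using the uniform area/density bounds, Brakke compactness, and local curvature estimates coming from the scale-invariant pinching, the translated flows $\tilde F(\cdot,\cdot+\tau_j)$ subconverge, smoothly and with multiplicity one, and the integral bound forces the limits to be stationary for the rescaled flow, i.e. self-shrinkers $\Sigma^\pm\subset\R^{n+k}$. The pinching \eqref{eqn_pinching} is invariant under parabolic rescaling and passes to smooth limits, so $\Sigma^\pm$ satisfies $|h|^2-c_n|H|^2\le-\e|H|^2$. By Lemma \ref{lem_alg1} applied to each $h_\alpha$, a strict inequality of this type is incompatible with a zero principal direction, so $\Sigma^\pm$ splits off no Euclidean factor; a pinched self-shrinker of bounded curvature with no such factor is compact (via Corollary \ref{thm_compact}), and a compact self-shrinker satisfying \eqref{eqn_pinching} with $|H|\neq 0$ is the round sphere $\mathbb S^n_{\sqrt{2n}}$ --- Huisken's classification of compact mean-convex self-shrinkers in codimension one, whose higher-codimensional analogue follows by adapting his weighted integration-by-parts argument to the Andrews--Baker pinching. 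Consequently $\tilde\Theta_-=\tilde\Theta_+$, both equal to the Gaussian density of the round sphere.

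\emph{Step 4 and main obstacle.} Since $\tilde\Theta$ is monotone with equal limits it is constant, so $\tilde H+\tfrac12\tilde F^\perp\equiv 0$: the rescaled flow is stationary, i.e. $\mc M_t$ shrinks homothetically about the origin, and being a compact pinched self-shrinker it must be $\mathbb S^n_{\sqrt{-2nt}}$. The monotonicity bookkeeping of Steps 2 and 4 is routine; the real work is in Step 3, namely obtaining uniform curvature bounds so that the rescaled flows converge smoothly with multiplicity one (where the scale-invariant pinching and White's local regularity theorem enter), and the classification of compact pinched self-shrinkers. This is precisely the point at which the estimates of Andrews and Baker \cite{Andrews2010} substitute for the codimension-one arguments of \cite{Huisken2015}.
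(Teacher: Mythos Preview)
Your approach is genuinely different from the paper's, and it has a real gap. The paper does not use Huisken's monotonicity formula at all. It works directly with the Andrews--Baker $L^p$ estimate \cite[Lemma~5]{Andrews2010}: for suitable $p\gg 1$ and $\sigma\sim p^{-1/2}$ the quantity $\varphi(t):=\int f_\sigma^p\,d\mu_g$ with $f_\sigma=|\ho|^2/|H|^{2(1-\sigma)}$ satisfies $\varphi'\le -\int|H|^2 f_\sigma^p\,d\mu_g$. H\"older's inequality then gives $\varphi'\le -\mu(\mc M_t)^{-2/(\sigma p)}\varphi^{\gamma}$ with $\gamma=1+\tfrac{2}{\sigma p}$. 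Inserting \eqref{eqn_areadecay} and integrating from $t$ to $s$ yields $\varphi(s)^{1-\gamma}\ge C\big(|t|^{1-2r/(\sigma p)}-|s|^{1-2r/(\sigma p)}\big)$, which blows up as $t\to-\infty$ once $p$ is chosen so large that $\sigma p>2r$. Hence $\varphi\equiv 0$ and each $\mc M_t$ is totally umbilic. The whole point of the polynomial hypothesis \eqref{eqn_areadecay} is that $r$ is fixed but arbitrary, while $\sigma p\sim\sqrt{p}$ can be made as large as desired.

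Your Step~2 fails precisely at this point: the claim that \eqref{eqn_areadecay} keeps the Gaussian area $\tilde\Theta$ bounded as $\tau\to-\infty$ is not justified. The only general bound is $\tilde\Theta(\tau)\le (4\pi|t|)^{-n/2}\mu(\mc M_t)\le C|t|^{\,r-n/2}$, which diverges whenever $r>n/2$. That is exactly the regime the theorem must cover; in the paper's subsequent application to $n\ge 3$, the area bound obtained from a type-I estimate has exponent $r=C$ for a potentially large constant $C$. Without a finite backward limit for $\tilde\Theta$, the monotonicity argument yields nothing and you cannot equate $\tilde\Theta_-$ with $\tilde\Theta_+$. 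There is no evident way to extract a uniform Gaussian density bound from a polynomial area bound of arbitrary degree, which is why both the paper and Huisken--Sinestrari (whose strategy you say you follow) run the $L^p$--ODE argument instead.

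Secondary issues compound the problem. In Step~3 you need smooth subconvergence of the rescaled flow as $\tau\to-\infty$, which requires a uniform bound on $(-t)|h|^2$; this is a type-I estimate that is neither assumed nor derivable from \eqref{eqn_areadecay}. And the high-codimension classification of compact pinched self-shrinkers you invoke is not established in the references available; ``adapting Huisken's weighted integration-by-parts'' is a substantial claim you would have to actually carry out. The paper's argument, by contrast, needs no self-shrinker classification, no blow-down compactness, and no density bound: it is a short ODE argument built on a single $L^p$ inequality.
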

\begin{proof}
We show that for small enough $\sigma >0$, the function 
\[f_\sigma := \frac{| \ho|^2}{|H|^{2(1-\sigma)}}\]
vanishes identically along the flow. It was shown in \cite[Lemma 5]{Andrews2010} that for pinched solutions, there are constants $p \gg 1$ and $\sigma \sim \frac{1}{\sqrt{p}}$ depending only on $n$ and $\varepsilon$ such that
\[\frac{d}{dt} \int f_\sigma^p \, d\mu_g \leq - \int |H|^2 f_\sigma^p \, d\mu_g.\]
Setting $\gamma := 1 + \frac{2}{\sigma p}$, we have $f_\sigma^{\gamma p} \leq |H|^2 f_\sigma^p$, and H\"{o}lder's inequality implies that
\[\int f_\sigma^p \, d\mu_g \leq \mu(\mc M_t)^\frac{2}{\gamma \sigma p} \left( \int f^{\gamma p}\, d\mu_g\right)^ \frac{1}{\gamma} \leq \mu(\mc M_t)^\frac{2}{\gamma \sigma p} \left( \int |H|^2f^{p}\, d\mu_g\right)^\frac{1}{\gamma},\]
and in turn,
\[\frac{d}{dt} \int f_\sigma^p \, d\mu_g \leq - \mu(\mc M_t)^{-\frac{2}{\sigma p}}\left( \int  f_\sigma^p \, d\mu_g\right)^\gamma.\]
Let $\varphi \:= \int f_\sigma^p \, d\mu_g$ and suppose that $\varphi(s) >0$ for some $s \in (-\infty, -T]$. Since $\varphi$ may not increase in time, this implies that $\varphi(t) > 0$ for all $ t\in (-\infty, s]$, and we have 
\[\frac{1}{1-\gamma} \frac{d \varphi^{1-\gamma}}{dt} = \varphi^{-\gamma} \frac{d}{dt} \varphi \leq - \mu(\mc M_t)^{-\frac{2}{\sigma p}} \leq - c|t|^{-\frac{2r}{\sigma p}}, \qquad t \leq s.\]
Integrating in time then yields
\begin{align*}
\varphi^{1-\gamma}(s)  &\geq  \varphi^{1-\gamma}(t)+\frac{2c}{\sigma p} \int_t^s |\tau| ^{-\frac{2r}{\sigma p}}\,d\tau \\
		&\geq \frac{2c}{\sigma p-2r} \left(|t|^{1-\frac{2r}{\sigma p}} - |s|^{1-\frac{2r}{\sigma p}}\right).
\end{align*}
Since $\sigma p \sim \sqrt{p}$ we may choose $p$ so large that $\sigma p >2r$, in which case the right-hand side of the last inequality becomes unbounded as $t \to -\infty$. This is a contradiction, so it must be the case that $\varphi(s) = 0$ for all $s \in (-\infty, -T]$, and hence for all $s <0$. 
\end{proof}

In other words, a pinched ancient solution with sufficiently slow area decay must be a family of shrinking spheres. To control the area of pinched, codimension one solutions, Huisken and Sinestrari \cite{Huisken2015} use a Gauss-Bonnet-type result. A similar approach works in all codimensions for flows of surfaces. 

\begin{proof}[Proof of Theorem \ref{thm_main}  ($n=2$)]
Theorem \ref{thm_riempinch} says that the Gauss curvature $\kappa$ of $\mc M_t$ satisfies
\[\kappa \geq \frac{\varepsilon}{2} |H|^2,\]
and since $\mc M_t$ evolves smoothly and shrinks to a round point as $t \to 0$ by \cite{Andrews2010}, it is diffeomorphic to $\mbb S^2$ at every fixed time. We may therefore apply the Gauss-Bonnet theorem to conclude that 
\[\int |H|^2 \, d\mu_g \leq \frac{2}{\varepsilon} \int \kappa \, d\mu_g = \frac{4\pi}{\varepsilon}.\]
Substituting into the area decay formula then yields
\[-\frac{d}{dt} \mu(\mc M_t) = \int |H|^2 \, d\mu_g \leq \frac{4\pi}{\varepsilon},\]
which we integrate in time to obtain
\[\mu(\mc M_t) \leq - \frac{4\pi}{\varepsilon}t. \]
Theorem \ref{thm_ancmain} can then be applied to finish. 
\end{proof}

It is not clear that this type of argument generalises to higher dimensions and codimensions, however we are still able to prove an area decay estimate in this setting by applying Corollary \ref{thm_compact}. We will find it convenient to introduce a dichotomy analogous to the one used when classifying finite-time singularities - we say that an ancient solution is of type I if there are positive constants $C$ and $T$ such that 
\[\max_{x \in \mc M} |H(x,t)| \leq \frac{C}{\sqrt{-t}}, \qquad t \leq -T,\]
and of type II in case
\begin{align*}
\limsup_{t \to -\infty}\, \max_{x \in \mc M} \sqrt{-t} |H(x,t)| = \infty.
\end{align*}

\begin{proof}[Proof of Theorem \ref{thm_main} ($n \geq 3$)]
We proceed by ruling out type-II blow-downs, and then showing that any type-I solution has slow enough area decay to apply Theorem \ref{thm_ancmain}.

Suppose that $F$ is of type II. To derive a contradiction, we choose $(x_j,t_j) \in \mathcal M \times [-j, 0)$, $j \in \mathbb{N}$, so that 
\[-t_j|H(x_j, t_j) |^2= \max_{(x,t) \in\, \mc M \times [-j,0)}-t|H(x,t)|^2.\]
We set $L_j = |H(x_j,t_j)|^2$, and note that the type-II condition implies
\[t_j \to -\infty, \qquad -t_j L_j \to \infty.\]
Following Huisken-Sinestrari \cite{Huisken2015} (cf. \cite{Hamilton1995a}, \cite{HuSi99a}), we consider the sequence of rescaled and translated flows
\[F_j(\cdot, \tau) := \sqrt{L_j} (F( \cdot\,, \tau L_j^{-1} + t_j) - F(x_j,t_j)),  \qquad \tau \in (-\infty, -t_jL_j).\]
Let $H_j$ denote the mean curvature vector corresponding to $F_j$ and observe that
\[1 = |H_j(x_j, 0)|= \max_{x \in \mathcal M} |H_j(x,0)|.\]
Our definition of $(x_j,t_j)$ ensures that for $\tau \in (0, -t_jL_j)$,
\[-(\tau L_j^{-1} + t_j) |H(x,\tau L_j^{-1} + t_j) |^2 \leq -t_j L_j,\]
so we have the bound
\[ |H_j(x,\tau)|^2 \leq \frac{t_j}{\frac{\tau}{L_j} + t_j}.\]
This implies that $|H_j(\cdot, \tau)|^2 \leq 2$ for all $\tau \in (0, -\frac{1}{2}t_j L_j)$, which combined with the pinching assumption provides a uniform bound for the full second fundamental form. We may therefore extract a subsequence of rescalings converging to a complete solution $F_\infty$ of mean curvature flow defined on the time interval $(1, \infty)$, and with mean curvature satisfying $0<|H_\infty| \leq 2$ . This is a contradiction - the pinching condition \eqref{eqn_pinching} is scale invariant and carries over to the limit, but forces compactness of the solution on any timeslice by Theorem \ref{thm_riempinch}, so $F_\infty$ must become singular in finite time.

We are left with the possibility that $F$ has type-I curvature growth on (without loss of generality) the time interval $(-\infty, -1]$. That is,
\begin{equation}
\max_{x \in \mc M} |H(x,t)|^2 \leq - \frac{C}{t}, \qquad t \leq -1, 
\end{equation}
for some $C>0$. The area decay formula then yields
\[-\frac{d}{dt} \mu(\mc M_t) = \int |H|^2 \, d\mu_g \leq -\frac{C}{t}\mu(\mc M_t)\] 
which we integrate to obtain
\[\mu(\mc M_t) \leq \mu(\mc M_{-1})|t|^{C}, \qquad t\leq -1. \]
Hence $\mc M_t$ is totally umbilic for all times by Theorem \ref{thm_ancmain}.
\end{proof}

Theorem \ref{thm_main} implies the following further characterisation of the shrinking sphere. Huisken and Sinestrari prove an analogous result for $k =1$ \cite{Huisken2015}, and we adapt their argument. 

\begin{theorem}
Let $F: \mathcal M^n \times (-\infty,0) \to \mathbb{R}^{n+k}$, $n \geq 2$, be a compact ancient solution to mean curvature flow with non-vanishing mean curvature vector which is weakly pinched,
\[|h|^2 - c_n|H|^2 \leq 0,\]
and has type-I curvature growth. Then $\mc M_t$ is a family of shrinking spheres. 
\end{theorem}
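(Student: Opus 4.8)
The plan is to reduce the weakly pinched case to the strictly pinched case handled by Theorem \ref{thm_main}. The obstacle is that weak pinching $|h|^2 - c_n|H|^2 \leq 0$ does not by itself give the uniform gap $-\varepsilon|H|^2$ needed to invoke the earlier results, and a priori the quantity $|h|^2/|H|^2$ could approach $c_n$ somewhere. The standard device, following Huisken--Sinestrari, is to perform a type-I rescaling as $t \to -\infty$ and extract a limit flow, then show the limit is a shrinking sphere (in particular strictly pinched), and finally transport that rigidity back to the original solution via a strong maximum principle argument.

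First I would set up the blow-down. Using the type-I hypothesis, choose a sequence $t_j \to -\infty$ and rescale: $F_j(\cdot,\tau) := \sqrt{-t_j}\,(F(\cdot, t_j + \tau(-t_j)) - p_j)$ for a suitable sequence of points $p_j$ (e.g. points where $|H|$ is comparable to its maximum, translated to the origin), so that $F_j$ is defined on a time interval exhausting $(-\infty,0)$ with curvature bounded above by the type-I constant. The pinching $|h|^2 \leq c_n|H|^2$ is scale invariant, so it passes to any limit; combined with the curvature bound and the compactness of each timeslice guaranteed by Theorem \ref{thm_riempinch} (the curvature operator is nonnegative and in fact, once we know $|H|>0$ on a region, strictly positive there), we can extract a subsequential limit $F_\infty$ which is a compact ancient solution, still weakly pinched. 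One must check that the limit is nontrivial, i.e. that $|H_\infty| \not\equiv 0$; this follows from the normalisation built into the choice of $p_j$ and $t_j$ (choosing them so that $|H|$ stays bounded below after rescaling, using type-I from below as well, or arguing that a flat limit contradicts compactness of timeslices).

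Next I would apply Huisken's monotonicity / the fact that a blow-down limit of an ancient solution is a homothetically shrinking self-similar solution (this is where compactness of each timeslice, hence of $F_\infty$, is essential — it rules out splitting off lines). A compact self-shrinker satisfying the Andrews--Baker pinching $|h|^2 \leq c_n|H|^2$ must be the round sphere: indeed by the argument in \cite{Andrews2010} (or directly, since Theorem \ref{thm_riempinch} forces positive curvature operator and one can appeal to the self-shrinker equation $H = \frac{1}{2}F^\perp$ together with the pinching to conclude umbilicity). So $F_\infty$ is a shrinking sphere, which in particular is \emph{strictly} pinched: $|h_\infty|^2 - c_n|H_\infty|^2 = -\varepsilon_0 |H_\infty|^2$ for some $\varepsilon_0 > 0$ depending only on $n$.

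Finally I would push the strict pinching back down. Consider the function $u := c_n|H|^2 - |h|^2 \geq 0$ on the original flow; using the evolution equations (\ref{eqn_pinchpres}) together with the gradient estimate and the reaction-term estimate $R_1 - c_nR_2 \leq 0$ valid in the pinched regime, $u$ satisfies a differential inequality of the form $\partial_t u \geq \Delta u - (\text{lower order})\cdot u$ (after dividing by $|H|^2$, or working with $u/|H|^2$ as in \cite{Andrews2010}), so that if $u/|H|^2$ vanishes at one point it vanishes identically by the strong maximum principle — but then the solution is totally umbilic and hence a shrinking sphere, and we are done. If instead $u/|H|^2 > 0$ everywhere at every time, then on compact timeslices $\inf_{\mc M_t} u/|H|^2 =: \varepsilon(t) > 0$; the content of the blow-down analysis above is precisely that $\liminf_{t\to-\infty}\varepsilon(t) \geq \varepsilon_0 > 0$, and since a pinching quantity of this type is non-decreasing along the flow (again by the maximum principle applied to $u/|H|^2$, as in the preservation-of-pinching lemma), $\varepsilon(t) \geq \varepsilon_0$ for all $t$. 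Thus the original solution satisfies the strict pinching $|h|^2 - c_n|H|^2 \leq -\varepsilon_0|H|^2$ and Theorem \ref{thm_main} applies directly. The main obstacle is the blow-down step — ensuring the rescaled limit is nontrivial, compact, and self-similar — since everything downstream is then a maximum-principle routine; one may need to invoke a Harnack-type estimate or the classification of pinched self-shrinkers from \cite{Andrews2010} to close the argument cleanly.
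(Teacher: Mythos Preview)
Your overall strategy---reduce to the strictly pinched case and invoke Theorem \ref{thm_main}---matches the paper's, but the execution diverges in two significant ways.

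First, the paper avoids the self-shrinker machinery entirely. Rather than showing the blow-down is self-similar and then classifying pinched self-shrinkers, it argues by contradiction: assume the solution is \emph{not} uniformly pinched, pick a sequence $(x_j,t_j)$ with $t_j\to-\infty$ along which $(c_n|H|^2-|h|^2)/|H|^2\to 0$, and rescale by $1/\sqrt{-t_j}$ on the bounded time interval $\tau\in[-2,-1]$. The type-I condition, integrated in time, gives a uniform extrinsic diameter bound, so the limit $F_\infty$ is a compact solution on $[-\tfrac{3}{2},-1]$---not ancient, not a priori self-similar. At the limiting point one has $|h_\infty|^2 = c_n|H_\infty|^2$, and the strong maximum principle applied to \eqref{eqn_pinchpres} forces this equality everywhere. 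Equality in turn forces both the gradient term and the reaction term in \eqref{eqn_pinchpres} to vanish identically, and \emph{this} (see \cite{Baker2011}) forces $F_\infty$ to be a shrinking sphere. But a sphere satisfies $|h|^2=\tfrac{1}{n}|H|^2 < c_n|H|^2$, contradicting the equality. This is shorter than your route: no Huisken monotonicity, no classification of pinched self-shrinkers, and the limit only needs to exist on a finite time interval.

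Second, your strong-maximum-principle dichotomy contains an error. With $u=c_n|H|^2-|h|^2$, the condition $u\equiv 0$ does \emph{not} say the solution is totally umbilic---that would be $|h|^2=\tfrac{1}{n}|H|^2$, a strictly smaller quantity. The correct rigidity mechanism is the one above: $u\equiv 0$ forces the gradient and reaction terms in \eqref{eqn_pinchpres} to vanish, which (via a separate argument) forces the sphere, and this then \emph{contradicts} $u\equiv 0$ rather than confirming it. So the ``done'' branch of your dichotomy, as written, does not close; it has to be rerouted through the contradiction the paper uses.
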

\begin{proof}
Recall that the type-I condition says
\[|H| \leq \frac{C}{\sqrt{-t}}, \qquad t \leq -T < 0.\]
This implies the bound
\begin{align}
|F(p,t) - F(p,-T)| & \leq \int_t^{-T} |H(p,\tau)| \, d\tau \leq 2C\sqrt{ - t}, \qquad t \leq -T
\end{align}
for any $p \in \mc M$, so for any pair of points $p, q \in \mc M$ there holds
\begin{align}
\label{eqn_TIdist}
|F(p,t) - F(q,t)| &\leq 4C\sqrt{-t} + |F(p, -T) - F(q,-T)| \notag \\
	&\leq 5 C \sqrt{-t}
\end{align}
as long as $t  \leq -C^{-1} \left( \sup_{p, q \in \mc M} |F(p, -T) - F(q,-T)|\right)^2$.
  
 Suppose now for a contradiction that $F$ is not uniformly pinched. That is, we assume there is a sequence of times $t_j \to -\infty$ and points $x_j \in  \mc M$ such that 
\[\lim_{j \to \infty} \frac{|h(x_j,t_j)|^2 - \frac{1}{n-1}|H(x_j, t_j)|^2}{|H(x_j, t_j)|^2} = 0.\]
We define a sequence of rescaled flows,
\[F_j(x,\tau) := \frac{1}{\sqrt{-t_j}} F\left(x,  -t_j\tau\right), \qquad \tau \in [-2, -1], \]
which, as a consequence of \eqref{eqn_TIdist}, all take values in a single compact subset of $\mathbb{R}^{n+k}$. The type-I condition provides a uniform upper bound for the functions $|H_j|$, which translates to an upper bound for the sequence $|h_j|$ via pinching, so the sequence $F_j$ converges to a compact, weakly pinched solution $F_\infty$ defined on $\mc M \times [-\frac{3}{2},-1]$. The mean curvature of the limit $H_\infty$ satisfies $|H_\infty| \geq 0$, so \eqref{eqn_pinchpres} and the strong maximum principle imply that $|H_\infty| > 0$ for $t > -\frac{1}{2}$. Our choice of sequence then implies the existence of an $x_\infty \in \mc M$ such that
\[|h_\infty(x_\infty, -1)|^2 = \frac{1}{n-1}|H_\infty(x_\infty, -1)|^2,\]
in which case another application of the strong maximum principle to \eqref{eqn_pinchpres} yields
\begin{equation}
\label{eqn_hinfty}
|h_\infty|^2 \equiv \frac{1}{n-1}|H_\infty|^2.
\end{equation}
This forces the gradient and reaction terms in \eqref{eqn_pinchpres} to vanish on all of $\mc M \times [-\frac{1}{2}, -1]$, which implies that $F$ is a shrinking sphere solution (see \cite{Baker2011}), contradicting \eqref{eqn_hinfty}.
\end{proof}

\subsection{Convergence to round points}

For dimensions $n \geq 3$ our results give an alternate proof of the convergence theorem due to Andrews and Baker, which says that pinched solutions shrink to round points. Consider a compact solution $F : \mc M^n \times [0,T) \to \mathbb{R}^{n+k}$ such that $T$ is maximal and the pinching condition \eqref{eqn_pinching} is satisfied at $t =0$. For convenience we assume $F$ is scaled so that $T >1$. That the pinching condition is preserved by the flow follows from the maximum principle applied to \eqref{eqn_pinchpres}. If $F$ undergoes a type-II singularity as $t \to T$, then we perform a Hamilton blow-up, similar to that in the proof of Theorem \ref{thm_main}. By assumption the quantity
\[(T-t_j )|H(t_j, x_j)|^2 := \max_{(x,t) \in \mc M \times[0,T-\frac{1}{j}]} (T-t)|H(x,t)|^2, \qquad j \in \mathbb{N},\]
blows up as $j \to \infty$. Thus, setting  $L_j = |H(x_j,t_j)|^2$,  the sequence of rescalings
\[F_j(\cdot, \tau) := \sqrt{L_j} (F(\cdot, \tau L_j^{-1} + t_j) - F(x_j, t_j)), \qquad 0 \leq \tau \leq \frac{1}{2}(T- j^{-1}-t_j)L_j ,\]
subconverges to a complete solution defined for times $\tau \in [1,\infty)$, which is uniformly pinched and has bounded, non-vanishing mean curvature. This is a contradiction, since Corollary \ref{thm_compact} implies that the solution is compact on every timeslice, and must therefore become singular in finite time. We conclude that $F$ undergoes a type-I singularity, that is, there is a constant $C > 0$ such that 
\[\max_{x \in \mc M} |H(x,t)| \leq \frac{C}{\sqrt{T-t}}, \qquad t\in [0,T).\]
In this case, we define for $j \in \mathbb{N}$ the sequence of rescalings
\[F_j (\cdot, \tau) := j F(\cdot, j^{-2} \tau + T), \qquad \tau \in [-j^2T, -1],\]
As in the proof of Theorem \ref{thm_main}, the type I assumption provides a uniform radius bound, as well as the curvature bound,
\[\max_{(x,t) \in \mc M \times [-j^2T, -1]} |H_j(x,t)| = \max_{(x,t) \in \mc M \times [0, T- j^{-2}]} \frac{|H(x,t)|}{j} \leq C,\]
so the sequence subconverges to a uniformly pinched ancient solution on the time interval $\tau \in (-\infty, -2]$. Corollary \ref{thm_compact} again ensures compactness on timeslices, so we may apply Theorem \ref{thm_main} to conclude that the limit is a shrinking sphere. 

\section{Pinched Ancient Solutions in the Sphere}

In this section we prove an analogue of Theorem \ref{thm_main} for the mean curvature flow in a spherical background. We consider solutions $F: \mc M^n \times (-\infty, 0 ) \rightarrow   \mbb S ^{n+k}_{R}$, where $ \mbb S ^{n+k}_R$ is the $(n+k)$-sphere of radius $R>0$ with sectional curvature $ K = R^{-2}$. 

Unlike Euclidean space, the sphere contains compact minimal surfaces. The simplest examples are the totally geodesic spheres, such as the equators. Minimal surfaces generate static solutions to the mean curvature flow, which are not only ancient but eternal. Further examples of ancient solutions are the shrinking spherical caps, which flow out of geodesic spheres on the equator and shrink to round points at a pole. The following theorem says that sufficiently pinched ancient solutions must be of one of these two forms (cf. \cite{Huisken2015} Theorem 6.1). 
\begin{theorem}
Let $F : \mc M^n \times (-\infty, 0) \to \mathbb{S}_R^{n+k}$ be a compact ancient solution to the mean curvature flow satisfying $|H| > 0$ for all times. 
\begin{enumerate} 

\item If there is a $\delta >0 $ such that for every $t \in (-\infty, 0)$ there holds
\begin{align*}
|h|^2 - \frac{1}{3} |H|^2 \leq (2-\delta) K, \qquad n =4,
\end{align*}
or 
\[|h|^2 - \frac{1}{n-1} |H|^2 \leq 2K, \qquad n \geq 5,\]
 then $\mc M_t$ is either a shrinking spherical cap or a totally geodesic sphere.
 
 \item If  $  |h|^2 \leq \frac{4}{3n} |H|^2$ for every $ t \in (-\infty, 0)$ then $ \mc M_t$ is a shrinking spherical cap. 
\end{enumerate}
 
\end{theorem}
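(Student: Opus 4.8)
The plan is to transplant the Euclidean machinery of Sections~3--4, together with the codimension-one analysis of Huisken and Sinestrari \cite{Huisken2015}, to the spherical background. Two features distinguish this setting. First, the ambient curvature adds zeroth-order terms to the evolution equations \eqref{eqn_evolSFF}--\eqref{eqn_pinchpres}; on the round sphere the mixed tangent--normal curvatures vanish (so Codazzi stays symmetric) and every new term has a favourable sign --- in particular the evolution of $|H|^2$ gains a summand $+2nK|H|^2 \ge 0$. Second, a parabolic rescaling by a factor $\lambda\to\infty$ sends $\mbb S^{n+k}_R$ to a sphere of curvature $K/\lambda^2\to0$, so blow-downs at $t=-\infty$ are governed by the Euclidean theory already developed. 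The argument is built around these two facts.

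I would begin by checking that the pinching is preserved: for part~(1) the maximum principle is applied to $\mc Q = |h|^2 - \tfrac1{n-1}|H|^2 - 2K$ (respectively $|h|^2 - \tfrac13|H|^2 - (2-\delta)K$ when $n=4$), the Andrews--Baker reaction estimates covering the relevant range of the coefficient of $|H|^2$ and the constant curvature terms only improving the inequality; for part~(2) one preserves $|h|^2 + a \le \tfrac{4}{3n}|H|^2$ as in the Preliminaries. The strong maximum principle applied to the corrected evolution of $|H|^2$ then shows that either $H\equiv0$ on $\mc M\times(-\infty,0)$ or $H$ never vanishes. In the first case each $\mc M_t$ is a fixed minimal submanifold of $\mbb S^{n+k}_R$ with $|h|^2\le 2K$ (strictly when $n=4$), and the Simons-type gap theorem for minimal submanifolds (Chern--do Carmo--Kobayashi in codimension one, with its higher-codimension refinements) forces $h\equiv0$, i.e.\ a totally geodesic sphere. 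So assume $H$ never vanishes. Combining the Gauss equation, Lemma~\ref{lem_alg1} applied to each $h_\alpha$, and the summation as in Theorem~\ref{thm_riempinch}, we get $\mc R \ge \big(\tfrac12(\tfrac1{n-1}|H|^2 - |h|^2) + K\big)I \ge 0$, with strict positivity when $n=4$.

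Next I would rule out a type-II blow-down, following the proof of Theorem~\ref{thm_main}: choosing $(x_j,t_j)$ to maximise $-t|H|^2$ on $\mc M\times[-j,0)$ and setting $L_j=|H(x_j,t_j)|^2$, there are two cases. If $L_j$ stays bounded, the time-translated (boundedly rescaled) flows converge to a \emph{compact}, uniformly pinched eternal solution in a sphere of finite radius, contradicting the Andrews--Baker convergence theorem \cite{Andrews2010} (it would have to become singular in finite forward time). If $L_j\to\infty$, the rescaled ambient flattens and we obtain a complete eternal solution in $\R^{n+k}$ with $0<|H_\infty|$ bounded, satisfying the \emph{weak} pinching $|h_\infty|^2\le\tfrac1{n-1}|H_\infty|^2$; applying the strong maximum principle to \eqref{eqn_pinchpres} for this limit gives a contradiction --- either the pinching is everywhere strict, so Corollary~\ref{thm_compact} (or a Cheeger--Gromoll-type splitting argument) forces compactness, again contradicting eternality, or equality holds somewhere and propagates, forcing $\nabla h_\infty\equiv0$ and, via the equality case of Lemma~\ref{lem_alg1}, that $F_\infty$ is a shrinking round sphere, which is not eternal. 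Hence $F$ has type-I curvature growth at $-\infty$, which yields the area bound $\mu(\mc M_t)\le c|t|^r$ (the radius bound being automatic in a compact ambient), and the monotonicity of $f_\sigma=|\ho|^2/|H|^{2(1-\sigma)}$ from \cite[Lemma~5]{Andrews2010} --- unchanged in form by the lower-order spherical corrections --- gives, as in Theorem~\ref{thm_ancmain}, that $\ho\equiv0$; each $\mc M_t$ is then a round geodesic sphere shrinking under the flow, a shrinking spherical cap. For part~(2) the hypothesis $|H|>0$ rules out $H\equiv0$ (which would force $h\equiv0$), so the totally-geodesic alternative cannot occur; running the same steps --- preservation, exclusion of type~II by blow-down to Euclidean space, a type-I area bound (from Gauss--Bonnet when $n=2$, where now $\kappa\ge\tfrac\e2|H|^2$ is also bounded below by a multiple of $K$, and from the type-I estimate otherwise), and the $f_\sigma$-monotonicity, all valid in the $\tfrac{4}{3n}$ range of Andrews--Baker --- yields that $\mc M_t$ is totally umbilic at each time, hence a shrinking spherical cap.

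The main obstacle is the type-II exclusion in the regime $L_j\to\infty$: after rescaling the limit lives in Euclidean space but its pinching has degenerated from strict to the borderline inequality $|h_\infty|^2\le\tfrac1{n-1}|H_\infty|^2$, precisely the case Corollary~\ref{thm_compact} does not cover. The contradiction must therefore come from a rigidity analysis of \eqref{eqn_pinchpres} for eternal solutions --- ruling out complete, non-compact, weakly pinched eternal solutions in $\R^{n+k}$ with $|H|>0$, via the strong maximum principle, the equality case of Lemma~\ref{lem_alg1}, and a splitting argument --- rather than from compactness alone; this is the step I expect to require genuine work beyond quoting the earlier results. A secondary point to verify is that the integral identity underlying \cite[Lemma~5]{Andrews2010} is not disturbed by the ambient curvature terms.
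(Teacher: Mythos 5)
Your approach is fundamentally different from the paper's, and it carries a gap you yourself flag. The paper does not go through a type-I/type-II dichotomy, blow-downs, area decay, or Theorem~\ref{thm_ancmain} at all. Instead it studies $f := |\ho|^2/(|H|^2 + b)$ directly (with $b = (1-\varepsilon)Kn(n-1)$ in part~(1), $b=0$ in part~(2)), using Baker's computation of $\partial_t f$ in a spherical background. The decisive observation --- and the reason the spherical theorem is in some ways \emph{easier} than the Euclidean one --- is that after estimating the gradient term and the reaction terms $R_1 - \tfrac1n R_2$ under the stated pinching, one obtains a strictly negative zeroth-order term proportional to the ambient curvature:
\[
\partial_t f \leq \Delta f + \frac{2}{|H|^2+b}\langle \nabla_i|H|^2, \nabla_i f\rangle - 2\vartheta K f.
\]
By the maximum principle this gives backward-in-time exponential growth $\max_{\mc M_t} f \geq e^{2\vartheta K(t_1-t)}\max_{\mc M_{t_1}} f$ whenever $f\not\equiv0$ at $t_1$, while the pinching forces the uniform bound $f\le 1$. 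The contradiction is immediate, so $\ho\equiv0$ and $\mc M_t$ is totally umbilic. No compactness theorem, blow-up, or Stampacchia-type integral estimate is needed. The mechanism is precisely that $K>0$ supplies the ``extra room'' which in Euclidean space one has to manufacture via the area-decay argument.

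By contrast, the step you identify as requiring ``genuine work'' --- excluding a complete, noncompact, \emph{weakly} pinched eternal limit in $\R^{n+k}$ after a type-II rescaling with $L_j\to\infty$ --- is a real gap. Corollary~\ref{thm_compact} needs strict pinching; the strong maximum principle applied to \eqref{eqn_pinchpres} only tells you equality propagates in time, not that the limit splits or is compact; and the cylinders $\mbb S^{n-m}\times\R^m$ show that weakly pinched, noncompact, complete solutions with $|H|>0$ do exist, so no such rigidity statement is available without further input. Your proposal also spends effort on the dichotomy $H\equiv 0$ versus $H$ never vanishing and a Simons-type gap theorem for minimal submanifolds, but $|H|>0$ is a standing hypothesis of the theorem, so the minimal case never arises. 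Finally, your secondary worry about transplanting \cite[Lemma~5]{Andrews2010} is well-founded but moot: the paper does not transplant the Euclidean integral inequality; it uses the spherical evolution equation for $f$ computed directly in \cite{Baker2011}.
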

\begin{proof}
1). Consider the auxiliary function  $f := \abs{\ho}^2/(\abs{H}^2 + b)$ with 
\[b = (1-\varepsilon)Kn (n-1)\] 
and $\varepsilon\in (0,1)$ to be fixed later. The evolution of this function was computed in \cite{Baker2011}, and is given by
\begin{align}
\label{eqn_evolsphere1}
 \partial_t f &=  \Delta f + \frac{2}{|H|^2 + b} \langle \nabla_i |H|^2 , \nabla_i f \rangle \notag \\
 &\;\;\;\; - \frac{2}{|H|^2 + b} \left(|\nabla h|^2 - \frac{1}{n} |\nabla H|^2 - \frac{|\ho|^2}{|H|^2 + b} |\nabla H|^2 \right) \notag\\
 &\;\;\;\; + \frac{2}{|H|^2 + b} \left( R_1 - \frac{1}{n} R_2 - nK|\ho|^2 - \frac{R_2 |\ho|^2}{|H|^2 + b} - \frac{nK|\ho|^2|H|^2}{|H|^2 + b}\right) \notag\\
 &=: \Delta f + \frac{2}{|H|^2 + b} \langle \nabla_i |H|^2 , \nabla_i f \rangle + \I + \II,
\end{align}
with $R_1$ and $R_2$ as above. In all dimensions, the pinching condition implies that
\[|\ho|^2 \leq \frac{1}{n(n-1)}|H|^2 + 2K,\]
and we still have the gradient estimate
\[\frac{3}{n+2}|\nabla H|^2 \leq |\nabla h|^2,\]
so for $\varepsilon$ sufficiently small, 
\begin{align*}
\I  &\leq -\frac{2}{|H|^2 + b}\left( \frac{3}{n+2} - \frac{1}{n} - \frac{1}{n(n-1)} \left( \frac{|H|^2 + 2Kn(n-1)}{ |H|^2 + (1-\varepsilon)Kn(n-1)}\right)\right)|\nabla H|^2\\
  &\leq -\frac{2}{|H|^2 + b}\left( \frac{3}{n+2} - \frac{1}{n} - \frac{3}{n(n-1)} \right)|\nabla H|^2.
\end{align*}
The right-hand side is nonpositive for all $n \geq 4$ and this term can be discarded. Before estimating the reaction terms, we introduce some notation. Around any point in $\mc M$ we choose a local orthonormal frame $\{ \nu_\alpha\}$ for $N \mc M$ such that $\nu_1 = \frac{H}{|H|}$ and a local orthonormal frame $\{e_i\}$ for $T \mc M$ which diagonalises $h_1$. We then write \[\ho_\alpha = h_\alpha - \frac{H_\alpha}{n} g, \qquad \ho_- = \sum_{\alpha > 1} \ho,\]
so that $|\ho|^2 = |\ho_1|^2 + |\ho_-|^2$. Andrews and Baker compute
\[R_2 = |\ho_1|^2 |H|^2 + \frac{1}{n} |H|^4,\]
and then derive the estimate
\[R_1 - \frac{1}{n} R_2 \leq |\ho_1|^4 + \frac{1}{n} |\ho_1|^2 |H|^2 + 4|\ho_1|^2 |\ho_-|^2 + \frac{3}{2}|\ho_-|^4,\]
so we have
\begin{align*}
\II &\leq  \frac{2}{(|H|^2 + b)^2} \bigg((4-n)|\ho_1|^2 |\ho_-|^2|H|^2 + \bigg(\frac{5}{2}-n\bigg)|\ho_-|^4|H|^2 \\
&\hspace{3.3cm}+2b|\ho|^4 - 2K|\ho|^2|H|^2 - bnK|\ho|^2\bigg)\\
&\leq \frac{2}{(|H|^2 + b)^2}\bigg( 2b|\ho|^4 - (2-\vartheta)K|\ho|^2|H|^2- (n-\vartheta)bK|\ho|^2\bigg)- 2\vartheta K f
\end{align*}
for any constant $\vartheta$, which we take to be small and positive. In case $n=4$, we now use the pinching to bound 
\begin{align*}
2b|\ho|^4 &- (2-\vartheta)K|\ho|^2|H|^2- (n-\vartheta)bK|\ho|^2\\
&\leq  -(2\varepsilon - \vartheta)K n(n-1) |\ho|^4\\
&\;\;\;\;\; - ((1-\varepsilon)(n-\vartheta) -(2-\vartheta)(2-\delta)) K^2n(n-1)|\ho|^2,
\end{align*}
and observe that the right-hand side can be made nonpositive by taking $\vartheta = 2\varepsilon$ sufficiently small. When $n > 4$, this is possible even for $\delta =0$, so in all dimensions we obtain
	\begin{equation}\label{eqn: evol eqn f_sigma 1 sphere}
		\begin{split}
				\p_t f &\leq \Delta f + \frac{ 2 }{ \abs{H} + b} \big\langle \nabla_i\abs{H}^2, \nabla_i f \big\rangle - 2\vartheta K f. \end{split}
	\end{equation}

Assume now that there is a time $t_1 \in (-\infty, 0)$ such that $\mc M _ {t_1} $ is a sphere. This implies that $ f \not \equiv 0$ on $ \mc M _ {t_{1}}$, so by \eqref{eqn: evol eqn f_sigma 1 sphere},
\begin{align*}
0< \max_{\mc M _ {t_1} } f \leq e^{ - 2 \vartheta K (t_1-t)} \max_{\mc M _t} f 
\end{align*}
for all $ t< t_i$. It follows that $ \max _{ \mc M _t} f \rightarrow \infty$ as $t \to -\infty$, but our pinching condition implies
\begin{align*}
|h| ^2 - \frac{1}{n} |H|^2 &\leq \frac{1}{n(n-1)} |H|^2 + 2  K \leq  H^2 + b, 
\end{align*} 
so that $ f \leq 1$ for all times. This is a contradiction, so $\mc M_t$ must be totally umbilic for all $t < 0$, and is therefore either a shrinking spherical cap or totally geodesic sphere. 

2) We now set $b = 0$ in the definition of $f$. Proceeding exactly as before, we find that the pinching condition $|h|^2 \leq \frac{4}{3n}|H|^2$ is exactly what is required to ensure that $\II \leq -4nkf$, and also implies that $\I \leq 0$. The same contradiction argument used above then shows that $\mc M_t$ is totally umbilic for all times, and since the pinching rules out geodesic spheres, $\mc M_t$ must be a shrinking spherical cap. 
\end{proof}

\bibliographystyle{plain}
\bibliography{ancient}
\end{document}